\newtheorem{Theorem}{Theorem}[section]
\theoremstyle{definition}
\theoremstyle{remark}
\numberwithin{equation}{section}
\newcommand{\R}{{\mathbb R}}
\newcommand{\C}{{\mathbb C}}
\newcommand{\tr}{{\textrm{\rm tr}\:}}
\begin{document}

\title[Oscillation theory]{The essential spectrum of canonical systems}

\author{Christian Remling}

\address{Department of Mathematics\\
University of Oklahoma\\
Norman, OK 73019}
\email{christian.remling@ou.edu}
\urladdr{www.math.ou.edu/$\sim$cremling}

\author{Kyle Scarbrough}

\address{Department of Mathematics\\
University of Oklahoma\\
Norman, OK 73019}

\email{kyle.d.scarbrough-1@ou.edu}
\date{July 29, 2019}

\thanks{2010 {\it Mathematics Subject Classification.} 34C10 34L40 47A06}

\keywords{canonical system, oscillation theory, essential spectrum}

\begin{abstract}
We study the minimum of the essential spectrum of canonical systems $Ju'=-zHu$. Our results can be described as
a generalized and more quantitative version of the characterization of systems with purely discrete spectrum,
which was recently
obtained by Romanov and Woracek \cite{RW}. Our key tool is oscillation theory.
\end{abstract}
\maketitle
\section{Introduction}
A \textit{canonical system }is a differential equation of the form
\begin{equation}
\label{can}
Ju'(x) = -zH(x)u(x) , \quad J=\begin{pmatrix} 0 & -1 \\ 1 & 0 \end{pmatrix} ,
\end{equation}
with a locally integrable coefficient function $H(x)\in\R^{2\times 2}$, $H(x)\ge 0$, $\tr H(x)=1$.
Canonical systems are of fundamental importance in spectral theory because they may be used
to realize arbitrary spectral data \cite[Theorem 5.1]{Rembook}.

We will explicitly discuss only half line problems $x\in [0,\infty)$,
and we always impose the boundary condition
$u_2(0) = 0$
at the (regular) left endpoint $x=0$. The canonical system together with this boundary condition generates
a self-adjoint relation $\mathcal S$ on the Hilbert space
\[
L^2_H(0,\infty) = \left\{ f: (0,\infty)\to\C^2 : \int_0^{\infty} f^*(x)H(x)f(x)\, dx < \infty \right\}
\]
and then also a self-adjoint
operator $S$ on a possibly smaller space, after dividing out the multi-valued part
of $\mathcal S$. We refer the reader to \cite{Rembook} for more on the basic theory.
We are interested in the spectral theory of $S$ and, more specifically, in its essential spectrum,
which we will denote by $\sigma_{ess}(H)$.

We will find it convenient to write our coefficient matrices in the form
\begin{equation}
\label{H}
H(x) = \begin{pmatrix} \sin^2\varphi & g\sin\varphi\cos\varphi \\
g\sin\varphi\cos\varphi & \cos^2\varphi \end{pmatrix} ,
\end{equation}
with $0\le g(x)\le 1$ and, say, $-\pi/2\le\varphi(x)<\pi/2$. Note that for each fixed $x\ge 0$, this is simply an explicit
way of describing the general matrix $H(x)\ge 0$ with $\textrm{tr}\: H(x)=1$.
Conversely, $g(x)$ and the angle $\varphi(x)$, modulo $\pi$, are determined by $H(x)$.

We study the location of the smallest (in absolute value)
point of $\sigma_{ess}$. So let's define
\[
M(H) = \min \{ |t| : t\in\sigma_{ess}(H) \}
\]
As usual in such situations,
we set $M(H)=\infty$ if $\sigma_{ess}=\emptyset$.

We will consider the positive and negative parts of $\sigma_{ess}$ separately,
and, as will become clearer later on, this is an essential feature of
our analysis. It then seems natural to assume that $0\notin\sigma_{ess}$, so that this set indeed splits into
two separate parts. (However, it will later turn out that our results work under a weaker assumption.)

If $0\notin\sigma_{ess}(H)$, then \eqref{can} at $z=0$ has an $L^2_H$ solution \cite[Theorem 3.8(b)]{Rembook}.
Of course, the solutions for $z=0$ are simply constants, so we conclude that
\begin{equation}
\label{1.1}
\int_0^{\infty} v^* H(x) v \, dx < \infty
\end{equation}
for some $v\in \R^2$, $v\not= 0$. We now adopt the set-up from \cite{RW}
and make the more specific assumption that $v=e_1=(1,0)^t$ here so that then \eqref{1.1} becomes the
condition that $\sin\varphi\in L^2(0,\infty)$. The general case can easily be reduced to this situation
by passing to the canonical system $R^*H(x)R$, with $R$ chosen as a rotation matrix
with $Re_1=v$. Recall here that $\sigma_{ess}(R^*HR)=\sigma_{ess}(H)$; this follows, for example,
from \cite[Theorem 3.20]{Rembook}.

One main source of inspiration for this paper is provided by the recent beautiful work
of Romanov-Woracek \cite{RW}, which, in its first part,
characterizes the canonical systems with purely discrete spectrum. In the setting just described, with
the assumption that $\sin\varphi\in L^2$ in place, Romanov-Woracek
prove that $\sigma_{ess}(H)=\emptyset$ if and only if
\begin{equation}
\label{rw}
\lim_{x\to\infty} x \int_x^{\infty} \sin^2\varphi(s)\, ds = 0 .
\end{equation}
The key ingredient to this is a comparison result (which, in \cite{RW}, is called the
independence theorem). This says that the
absence of essential spectrum is not affected by arbitrary changes to the off-diagonal parts of $H(x)$.
In particular, $\sigma_{ess}(H)=\emptyset$ if and only if $\sigma_{ess}(H_d)=\emptyset$, and here
$H_d$ denotes the diagonal canonical system that is obtained by setting the off-diagonal elements
(or, equivalently, $g(x)$) equal to zero in \eqref{H}. This comparison theorem
is of considerable independent interest.
In fact, it is quite surprising since such a removal of the
off-diagonal terms is not a small perturbation in any
traditional sense, and indeed the essential spectra of $H$ and $H_d$ can differ
quite substantially in more general situations.

In a second step, Romanov-Woracek then derive the criterion \eqref{rw} for the diagonal system;
here, similar investigations can be found in the earlier literature, see for example \cite{Kac,KacK,RS1}.

The whole analysis of \cite{RW} proceeds by studying
the resolvent operator $(\mathcal S -z)^{-1}$ for $z=0$, and this method is mainly powered by the fact
that the kernel of this integral operator can be written down explicitly in terms of $H$.

We will give a completely different treatment here. It will be oscillation theoretic, and can thus be viewed
as a continuation of our work begun in \cite{RS1}. Given the significance of the results of \cite{RW},
it might be of some interest to have an alternative approach, and it will again turn out that
oscillation theory is a flexible tool that produces quick and transparent proofs.

More importantly, our new approach will allow us
to establish more quantitative versions of these results
which will apply to more general situations. This we will do for both parts of the analysis of \cite{RW},
the comparison theorem and the actual analysis of $\sigma_{ess}(H_d)$.

The comparison theorem of Romanov-Woracek may be formulated, in our notation, as the statement
that $M(H)=\infty$ if and only if $M(H_d)=\infty$. We will sharpen this as follows here.
\begin{Theorem}
\label{T1.1}
Assume that $\sin\varphi\in L^2(0,\infty)$. Then
\[
\frac{1}{2}\, M(H_d) \le M(H) \le \frac{2}{3-\sqrt{5}} M(H_d) .
\]
\end{Theorem}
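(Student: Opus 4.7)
The plan is to reformulate both inequalities as comparisons of Pr\"ufer phases for \eqref{can} and $H_d$, continuing the oscillation-theoretic program of \cite{RS1}. Write the solution of \eqref{can} meeting the left boundary condition in polar form $u(x,z) = \rho(x,z)\, e(\theta(x,z))$, where $e(\theta) = (\cos\theta,\sin\theta)^T$ and $\theta(0,z) = 0$. Differentiating and using \eqref{can} (together with $Je(\theta) = (-\sin\theta,\cos\theta)^T$) yields the scalar Pr\"ufer ODE
\[
\theta'(x,z) = z\, e(\theta)^*\, H(x)\, e(\theta),
\]
which is monotone in $x$ (increasing for $z>0$) because $H \ge 0$. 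Define $\theta_d$ by the analogous equation with $H_d$ in place of $H$. The oscillation principle underlying \cite{RS1} characterizes $M(H)$ as the infimum of $|z| > 0$ for which $\theta(\infty,z) := \lim_{x\to\infty}\theta(x,z)$ fails to be finite, so Theorem~\ref{T1.1} reduces to transferring the finiteness of the limit phase between $\theta$ and $\theta_d$ at energies related by the claimed factors.

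For the lower bound $M(H) \ge M(H_d)/2$, start from the identity
\[
e(\theta)^* H(x)\, e(\theta) = e(\theta)^* H_d(x)\, e(\theta) + \tfrac{g(x)}{2}\sin(2\theta)\sin(2\varphi(x)),
\]
obtained by direct expansion of \eqref{H}. Together with the elementary estimate $|2\cos\theta\sin\varphi\cdot\sin\theta\cos\varphi|\le\cos^2\theta\sin^2\varphi + \sin^2\theta\cos^2\varphi = e^*H_d e$ and the bound $0\le g\le 1$, this gives the pointwise inequality $e^* H e \le 2\, e^* H_d e$, valid uniformly in $x$ and $\theta$. Both Pr\"ufer ODEs are Lipschitz in $\theta$ and share the initial value $0$, so the standard scalar ODE comparison principle yields $\theta(x,z) \le \theta_d(x,2z)$ for every $z > 0$ (and the reverse inequality for $z < 0$). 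Finiteness of $\theta_d(\infty,\pm 2z)$ therefore forces finiteness of $\theta(\infty,\pm z)$, which is exactly $M(H) \ge M(H_d)/2$.

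The upper bound $M(H) \le \frac{2}{3-\sqrt 5} M(H_d)$ is genuinely harder, since a pointwise reversed inequality $e^* H_d e \le C\, e^* H e$ is unavailable: at $g = 1$ we have $e^* H e = \sin^2(\theta+\varphi)$, which vanishes on the locus $\theta \equiv -\varphi \pmod{\pi}$ while $e^* H_d e$ is of size $\sim \tfrac12\sin^2(2\varphi)$ there. The hypothesis $\sin\varphi \in L^2(0,\infty)$ enters here in an essential way: it forces $\varphi$ to lie close to $0$ mod $\pi$ off an integrably small set, so the zeros of $e^*He$ cluster near the zeros of $e^*H_d e$. I propose a splitting argument: for a parameter $\delta > 0$, decompose the $x$-axis into the good set $G_\delta = \{x : |\sin(\theta(x,z)+\varphi(x))| \ge \delta\}$ and its complement. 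On $G_\delta$, algebraic manipulation of the identity above produces a pointwise bound $e^* H_d e \le C(\delta)\, e^*H e$ (with $C(\delta) \to \infty$ as $\delta \to 0$), so the $G_\delta$-contribution to $\theta_d$'s rotation is directly controlled by $\theta$. On the complement, $e^* H_d e$ is bounded by $O(\sin^2(2\varphi)) + O(\delta)$, whose accumulated contribution is controlled by $\int \sin^2\varphi\, dx < \infty$ together with the oscillation count of $\theta$ (itself finite because $\theta(\infty,z)$ is finite). Optimizing $\delta$ against $C(\delta)$ should yield a quadratic whose positive root is of golden-ratio form, delivering the constant $\frac{2}{3-\sqrt 5} = \tfrac{3+\sqrt 5}{2} = \phi^2$.

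The principal obstacle is clearly the last step: making the averaging rigorous and quantitatively sharp enough to pin down the specific constant $\frac{3-\sqrt 5}{2} = 1/\phi^2$. The lower bound is a routine application of pointwise ODE comparison, while the upper bound forces one to balance a controlled pointwise degradation on the good set against the integrable smallness provided by $\sin\varphi \in L^2$ on the bad set. It is in this balance that the golden-ratio constant must emerge, likely from a relation of the form $\lambda + 1/\lambda = 3$, the defining equation of $\phi^2$.
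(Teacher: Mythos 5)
Your treatment of the first inequality is correct and is exactly the paper's argument: the pointwise bound $e_{\theta}^*He_{\theta}\le 2\,e_{\theta}^*H_de_{\theta}$, obtained from $2|g\sin\varphi\cos\varphi\sin\theta\cos\theta|\le\sin^2\varphi\cos^2\theta+\cos^2\varphi\sin^2\theta$, followed by the first-order comparison principle and the oscillation-theoretic description \eqref{M+} of $M$.

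The second inequality, however, is only a plan, and the plan cannot be completed as described. There are two concrete obstructions. First, your scheme is one-sided: it tries to deduce non-oscillation of the diagonal equation at $z/C$ from non-oscillation of \eqref{ot} at a \emph{single} energy $z$ of one sign. This would prove the one-sided estimate $M_{\pm}(H)\le C\,M(H_d)$, which is false; for instance, with $g\equiv 1$ and $H=P_{\varphi}$ a projection with suitably monotone $\varphi$, the spectrum of $H$ lies in a half-line, so \eqref{ot} is non-oscillatory at \emph{every} $t$ of one sign, while $M(H_d)$ is finite. This is precisely why the theorem is stated for $M(H)=\min(M_+,|M_-|)$ and why any correct proof must use the Pr\"ufer angles at $+t$ and $-t$ simultaneously. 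Second, the good/bad splitting degenerates: under your hypotheses $\theta(x,z)$ converges to $0\bmod\pi$ and $|\sin\varphi|<\delta/2$ off a set of finite measure, so $G_{\delta}\cap[x_0,\infty)$ has finite measure for large $x_0$ and contributes only a bounded amount of rotation. Since oscillation is an asymptotic property, the entire question is decided on the bad set, i.e.\ exactly in the regime where both $\theta$ and $\varphi$ are small and where you have no usable lower bound on $e^*He$ in terms of $e^*H_de$.

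The idea you are missing is the following. Take $t\in(0,M(H))$ and the two Pr\"ufer angles $\theta_+$ (at $+t$) and $\theta_-$ (at $-t$), normalized so that $\theta_+\le 0\le\theta_-$ and both tend to $0$. One computes $(\theta_+-\theta_-)'=tF(\theta_+,\theta_-,\varphi)$ and shows, after Taylor expansion in the small quantities $\theta_{\pm}$ and $T=\tan\varphi$, that $F\ge(c-\delta)\bigl(\sin^2\varphi\cos^2(\theta_+-\theta_-)+\cos^2\varphi\sin^2(\theta_+-\theta_-)\bigr)$; thus $\theta_+-\theta_-$ dominates a solution of the diagonal Pr\"ufer equation at energy $(c-\delta)t$, which is therefore non-oscillatory. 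The required inequality reduces to nonnegativity of the quadratic $(2-c)T^2-2|T|\,|\theta_++\theta_-|+\theta_+^2+\theta_-^2-c(\theta_+-\theta_-)^2$, and after minimizing over $T$ and substituting $q=-\theta_+/\theta_-$ this holds exactly when $c^2-3c+1\ge 0$. Your guessed relation $\lambda+1/\lambda=3$ is indeed the equation that appears, but it is produced by this two-sided construction, not by a good-set/bad-set average.
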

The first inequality is optimal, and we will present such an example in the final section.
The second one almost certainly isn't, and in fact we conjecture that the optimal
constant is $1$ rather than our value $2/(3-\sqrt{5})\simeq 2.62$, which does not seem to have any
real significance and is simply delivered by certain computations.
But we have not made any real progress on this question.

The inclusion of both the negative and positive parts of $\sigma_{ess}(H)$ in the definition
of $M(H)$ is crucial here. One-sided versions of Theorem \ref{T1.1} that only deal with the positive (or negative)
part of the essential spectrum fail badly. For example,
one can simply start out with an $H(x)=P_{\varphi(x)}$ that is a projection; in other words,
$g\equiv 1$; see also \eqref{pr} below. Under suitable monotonicity assumptions on $\varphi(x)$
(see \cite{RS1,Win,WW} for more details), such an $H$ will have non-negative spectrum. However,
the spectrum of a diagonal canonical system is always symmetric about zero \cite[Theorem 6.13]{Rembook}.

The fact that specifically the diagonal system $H_d$
serves as the comparison operator is due to our normalization $\sin\varphi\in L^2$.
In general, if we only assume \eqref{1.1}, then, as discussed above, we can use the transformation
$R^*H(x)R$ to return to the situation discussed in Theorem \ref{T1.1}. It is useful to observe here
that the coefficient matrices with a given diagonal (these are the ones that
are being compared in Theorem \ref{T1.1}) can be described as the line segment
$\lambda P_{\varphi} + (1-\lambda)P_{-\varphi}$, with
\begin{equation}
\label{pr}
P_{\beta} = \begin{pmatrix} \sin^2\beta & \sin\beta\cos\beta \\
\sin\varphi\cos\beta & \cos^2\beta
\end{pmatrix}
\end{equation}
denoting the projection onto $(\sin\beta,\cos\beta)^t$, and $\lambda=\lambda(x)$
satisfies $0\le\lambda\le 1$. The diagonal system $H_d$ is obtained by taking $\lambda=1/2$ here.

So in the general case Theorem \ref{T1.1} will be comparing the coefficient functions
$\lambda(x) P_{\alpha+\psi(x)} + (1-\lambda(x)) P_{\alpha-\psi(x)}$, and here $\alpha$ depends
on the $v$ from \eqref{1.1}, and $\psi=\psi(\varphi,g;\alpha)$ depends on the original data
(a straightforward calculation would of course produce an explicit formula).
In particular, note that the role of $H_d$ is now taken over by
$(1/2)(P_{\alpha+\psi}+P_{\alpha-\psi})$, which will usually not be
a diagonal matrix.

We also present a completely different type of comparison result. This will work especially well for
diagonal canonical systems.
\begin{Theorem}
\label{T1.2}
Assume that $\sin\varphi\in L^2(0,\infty)$. Let $L(t)$ be the (self-adjoint) Schr{\"o}dinger operator
on $L^2(0,\infty)$ with, say, Dirichlet boundary conditions $u(0)=0$ that is generated by
\[
L(t) = -\frac{d^2}{dx^2} - t^2\sin^2\varphi(x) ,
\]
and define
\[
S = \sup \{ t\ge 0 : L(t) \textrm{\rm{ has finite negative spectrum}} \} .
\]
Then $M(H_d)=S$.
\end{Theorem}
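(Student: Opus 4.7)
The plan is to characterize $M(H_d)$ and $S$ as non-oscillation thresholds of two closely related second-order equations, and to establish the equality via an exact tail-integral identity that survives a Liouville-type change of variable. Classical Sturm oscillation theory immediately identifies $S$ as the supremum of $t\ge 0$ for which the zero-energy equation $u''+t^2\sin^2\varphi\, u=0$ is non-oscillatory at infinity. On the canonical side, I would invoke the oscillation theory for canonical systems developed in \cite{RS1}: $M(H_d)\ge z$ holds iff the diagonal canonical system at energy $z$ is non-oscillatory. Writing out the system $Ju'=-zH_du$ as $u_1'=-z\cos^2\varphi\, u_2$ and $u_2'=z\sin^2\varphi\, u_1$ and eliminating $u_2$ yields the Sturm--Liouville equation
\[
\Bigl(\frac{u_1'}{\cos^2\varphi}\Bigr)' + z^2\sin^2\varphi\, u_1 = 0,
\]
whose non-oscillation is equivalent to the non-oscillation of the original canonical system.

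Applying the Liouville change of variable $s(x)=\int_0^x\cos^2\varphi(y)\,dy$ transforms this equation into the pure Schr\"odinger form $y_{ss}+z^2\tan^2\varphi(x(s))\,y=0$. Because $\sin\varphi\in L^2(0,\infty)$, the offset $s(x)-x=-\int_0^x\sin^2\varphi$ is bounded, so $s\to\infty$ iff $x\to\infty$ and non-oscillation in $s$ is the same as non-oscillation in $x$. The pivotal observation is a tail-integral identity: using $\tan^2\varphi\cos^2\varphi=\sin^2\varphi$ and $d\sigma=\cos^2\varphi\,dy$,
\[
\int_s^\infty \tan^2\varphi(x(\sigma))\,d\sigma \;=\; \int_{x(s)}^\infty \tan^2\varphi(y)\cos^2\varphi(y)\,dy \;=\; \int_{x(s)}^\infty \sin^2\varphi(y)\,dy.
\]
Thus the potentials $z^2\tan^2\varphi$ (in $s$) and $z^2\sin^2\varphi$ (in $x$) have exactly the same tail integral once the variables are matched through $s\leftrightarrow x$.

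Since the non-oscillation of a half-line Schr\"odinger equation with integrable nonnegative potential is governed by tail-integral asymptotics (Hille's criterion: non-oscillation if $\limsup_{x\to\infty} x\int_x^\infty Q<1/4$, oscillation if $\liminf>1/4$), and since the two tail integrals coincide while $s/x\to 1$, the two equations have the same non-oscillation threshold in the parameter $z$, yielding $M(H_d)=S$. The main obstacle is making the Hille-type threshold equivalence sharp in the critical case $\limsup x\int_x^\infty Q=1/4$, where Hille's criterion is inconclusive. I would address this either by a direct Riccati fixed-point comparison (the two Riccati equations are conjugated by the same bounded change of variable, so their tail iterates converge or diverge in lock-step), or, more naturally in this paper's framework, by appealing to the sharp oscillation-theoretic machinery of \cite{RS1} that is the authors' central tool.
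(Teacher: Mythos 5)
Your overall strategy---identify both $M(H_d)$ and $S$ as non-oscillation thresholds of scalar second-order equations and then compare the two equations---is sound, and the reduction of the diagonal canonical system to the Sturm--Liouville equation $\bigl(u_1'/\cos^2\varphi\bigr)'+z^2\sin^2\varphi\,u_1=0$ is correct (modulo the minor care needed where $\cos\varphi=0$, which the quasi-derivative formulation handles). The identification of $S$ with the non-oscillation threshold of $u''+t^2\sin^2\varphi\,u=0$ is also correct. The gap is in the final, and only substantive, step: you conclude that the two equations have the same threshold because their potentials have equal tail integrals under the Liouville correspondence $s\leftrightarrow x$. Equality of tail integrals (even together with $s/x\to1$) does not determine oscillation behavior; Hille's criterion is silent exactly in the critical window $x\int_x^\infty Q\approx 1/4$, and that window is precisely where the content of the theorem lives---outside of it, Theorems \ref{T1.3} and \ref{T1.4} and their classical Schr\"odinger analogues already force $M(H_d)=S$. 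You acknowledge this, but neither proposed repair is substantiated. In particular, the two Riccati equations are \emph{not} conjugate by a bounded change of variable: pulling the transformed equation back to the $x$-variable gives $v'=-z^2\sin^2\varphi-\cos^2\varphi\,v^2$, versus $w'=-z^2\sin^2\varphi-w^2$ for the Schr\"odinger side, and these differ in the coefficient of the quadratic term, which is exactly the discrepancy one must control.

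The discrepancy can in fact be controlled, but by a different mechanism than tail integrals, and it is the same mechanism the paper uses: in the non-oscillatory regime with $\sin\varphi\in L^2$, the relevant Riccati variable tends to zero ($v=-zu_2/u_1\to0$ because the Pr\"ufer angle tends to $0\bmod\pi$; similarly $w=u'/u\to0$ for the principal solution). Then the error term $\sin^2\varphi\,v^2=o(\sin^2\varphi)$ can be absorbed by an arbitrarily small decrease of $z$, since the potential carries the prefactor $z^2$. Concretely, $w'+\cos^2\varphi\,w^2+z^2\sin^2\varphi=-\sin^2\varphi\,w^2\le0$ gives $S\le M(H_d)$ outright, while $v'+v^2+z'^2\sin^2\varphi=\sin^2\varphi\bigl(v^2-(z^2-z'^2)\bigr)\le0$ eventually, for any $z'<z$, gives $M(H_d)\le S$. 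This is essentially the paper's proof transcribed from Pr\"ufer to Riccati variables (the paper expands $\cos^2\varphi\sin^2\theta+\sin^2\varphi\cos^2\theta\ge(1-\eta)(\theta^2+\sin^2\varphi)$ for small $\theta$ and compares with $\theta_1'=t(\theta_1^2+\sin^2\varphi)$, which it then converts to the Schr\"odinger equation). As written, your argument establishes the theorem only outside the critical case and must be completed along these lines.
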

Note that the potential $-t^2\sin^2\varphi$ is integrable, so the spectrum of $L(t)$ is always purely discrete on
$(-\infty, 0)$ (possibly empty), and the only question about it is whether or not the eigenvalues accumulate at zero.
This alternative does of course not depend on the boundary condition at $x=0$.
Moreover, it follows at once from the min-max principle that we switch from finite to infinite negative spectrum only
once (or not at all), at a unique $t=S$.

It also possible to more generally relate $M(H)$ to the negative spectra
of certain Sturm-Liouville operators (which could in fact
also be rewritten as Schr{\"o}dinger operators), but the results are less complete and satisfying,
and we won't say more on this here. This topic will be the subject of continuing research.
\begin{Theorem}
\label{T1.3}
Assume that $\sin\varphi\in L^2(0,\infty)$, and let
\[
A = \limsup_{x\to\infty} x \int_x^{\infty} \sin^2\varphi(t)\, dt \in [0,\infty] .
\]
Then
\[
\frac{1}{2\sqrt{A}} \le M(H_d) \le \frac{1}{\sqrt{A}} .
\]
\end{Theorem}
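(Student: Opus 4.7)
The plan is to apply Theorem~\ref{T1.2} first, which identifies
\[
M(H_d) = S = \sup\{t\ge 0 : L(t) \textrm{ has finite negative spectrum}\}.
\]
Since $\sin^2\varphi\in L^1$, the essential spectrum of $L(t)$ is $[0,\infty)$, so the problem reduces to a one-dimensional Schr{\"o}dinger question: for which $t$ is the equation $-u'' - t^2\sin^2\varphi\, u = 0$ oscillatory at infinity? I will handle the two bounds by Hille-style arguments: a Hardy-inequality supersolution for non-oscillation, and a direct trial-function construction for oscillation.

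For the lower bound $S\ge 1/(2\sqrt{A})$, fix $t<1/(2\sqrt{A})$ and put $q=t^2\sin^2\varphi$, so that $\int_x^\infty q\,ds\le c/x$ for $x\ge X$, with some $X$ large and some $c<1/4$. For $u\in C_c^\infty(X,\infty)$, integration by parts gives
\[
\int_X^\infty q u^2\, dx = 2\int_X^\infty \Bigl(\int_x^\infty q\,ds\Bigr) u u'\, dx \le 2c\int_X^\infty \frac{|u u'|}{x}\, dx,
\]
and Cauchy--Schwarz combined with Hardy's inequality $\int u^2/x^2\le 4\int u'^2$ yields $\int q u^2\le 4c\int u'^2<\int u'^2$. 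So the quadratic form of $L(t)$ is strictly positive on $H^1_0(X,\infty)$; since any oscillation of $L(t)u=0$ in $(X,\infty)$ would produce infinitely many disjointly supported $H^1_0$ functions on which the form vanishes, this forces non-oscillation at infinity and hence finite negative spectrum, so $t\le S$.

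For the upper bound $S\le 1/\sqrt{A}$, fix $t>1/\sqrt{A}$ and pick $x_n\to\infty$ with $x_n\int_{x_n}^\infty\sin^2\varphi$ approaching $A$. For a parameter $R>1$ and scales $L_n\gg x_n$ I would take $u_n$ to be the trapezoid supported on $[x_n/R,\, 2L_n]$ that rises linearly to $1$ on $[x_n/R, x_n]$, equals $1$ on $[x_n, L_n]$, and falls linearly to $0$ on $[L_n, 2L_n]$. A direct calculation gives $\int u_n'^2 = R/((R-1)x_n) + 1/L_n$, while $\int \sin^2\varphi\, u_n^2 \ge \int_{x_n}^{L_n}\sin^2\varphi$. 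Letting $L_n\to\infty$ first and then choosing $R$ large enough that $R/(R-1) < t^2 A$ makes the Rayleigh quotient of $u_n$ strictly less than $t^2$ for all large $n$. Since the $x_n$ may be taken sufficiently spread out that the supports $[x_n/R, 2L_n]$ are pairwise disjoint, this gives an infinite-dimensional subspace on which the form of $L(t)$ is negative, forcing infinite negative spectrum and thus $t>S$.

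The main subtlety is the factor-of-two gap between the two bounds. It reflects the familiar asymmetry in Hille-type criteria: the critical value for the $\limsup$ non-oscillation test is the classical $1/4$, but the trial-function construction above only detects oscillation when $\limsup x\int_x^\infty q$ exceeds the larger value $1$. Any attempt to close this gap will either need to use more structure of $\sin^2\varphi$ beyond the Hille average, or use a $\liminf$ rather than a $\limsup$ hypothesis, and appears to be a genuinely new question.
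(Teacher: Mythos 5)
Your argument is correct, and it splits naturally into a half that matches the paper and a half that does not. For the upper bound $M(H_d)\le 1/\sqrt{A}$ you do essentially what the paper does: invoke Theorem~\ref{T1.2} and feed tent/trapezoid test functions into the quadratic form of $L(t)$; your parameter $R$ plays the role of the paper's ratio $b/(b-a)\to 1$, and the disjoint-supports remark correctly upgrades ``form negative somewhere arbitrarily far out'' to ``infinite negative spectrum.'' For the lower bound, however, the paper does \emph{not} pass through Theorem~\ref{T1.2}: it works directly with the Pr{\"u}fer equation \eqref{ot} for $H_d$, substitutes $\alpha=t\theta_2+t^2W$ into the Riccati comparison equation \eqref{3.3}, and compares with the explicitly solvable Euler-type equation $\alpha_1'=(\alpha_1-t^2B/x)^2$, where the threshold $t^2B<1/4$ comes from \cite{RS1}. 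You instead reduce to the Schr{\"o}dinger side and prove disconjugacy of $-u''-t^2\sin^2\varphi\,u=0$ via the integration by parts $\int qu^2=2\int(\int_x^\infty q)\,uu'$, Cauchy--Schwarz, and Hardy's inequality --- a clean variational rendering of the same Hille $1/4$-threshold. Your route is arguably more self-contained (it avoids citing the solution analysis of eqn.\ (4.3) in \cite{RS1}), at the cost of leaning on Theorem~\ref{T1.2} for both directions; the paper's Riccati route has the advantage of staying entirely within the canonical-system framework and of generalizing to situations where the reduction to a Schr{\"o}dinger operator is unavailable. Two cosmetic points: your conclusion in the upper bound should read $t\ge S$ rather than $t>S$ (harmless for the supremum), and your closing speculation about the $\liminf$ hypothesis is precisely what the paper carries out as Theorem~\ref{T1.4}.
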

Notice that we recover criterion \eqref{rw} as the special case $A=0$ of this.

If the limit exists, then the lower bound becomes accurate. More generally,
we always have the following alternative upper bound.
\begin{Theorem}
\label{T1.4}
Assume that $\sin\varphi\in L^2(0,\infty)$, and let
\[
B = \liminf_{x\to\infty} x \int_x^{\infty} \sin^2\varphi(t)\, dt .
\]
Then
\[
M(H_d)\le \frac{1}{2\sqrt{B}} .
\]
\end{Theorem}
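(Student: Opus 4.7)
The plan is to use Theorem \ref{T1.2} to reduce the problem to a classical Hardy-type criterion for the Schr\"odinger operator $L(t)$: it suffices to show that $L(t)$ has infinitely many negative eigenvalues whenever $t > 1/(2\sqrt{B})$, since then $t > S = M(H_d)$, and letting $t \downarrow 1/(2\sqrt{B})$ yields the bound. The relevant threshold $t^2 B > 1/4$ is precisely the sharp Hardy constant, which suggests modelling test functions on the Hardy extremizer $u(x) = \sqrt{x}$. I would also dispose of the case $A = \infty$ at the outset, since Theorem \ref{T1.3} already gives $M(H_d) \le 1/\sqrt{A} = 0$ there; so one may assume $A < \infty$, and that, together with the definition of $B$, produces constants $x_0, K > 0$ and a fixed $\epsilon > 0$ with $t^2(B - \epsilon) > 1/4$ and
\[
(B - \epsilon)/x \;\le\; F(x) \;\le\; K/x \qquad (x \ge x_0) ,
\]
where $F(x) = \int_x^\infty \sin^2\varphi(s)\, ds$.

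On pairwise disjoint intervals $[a_n, a_n e^L] \subset [x_0, \infty)$ with $a_n \to \infty$ and $L$ a large parameter to be fixed later, I would take
\[
u_n(x) \;=\; \sqrt{x}\, \sin\bigl(\pi \log(x/a_n)/L\bigr) \quad \text{on } [a_n, a_n e^L], \quad u_n \equiv 0 \text{ elsewhere.}
\]
The substitution $y = \log(x/a_n)$ makes $\int (u_n')^2\, dx = L/8 + \pi^2/(2L)$ a one-line calculation. For the potential term I would integrate by parts against $F$; since $u_n^2$ vanishes at both endpoints, the boundary terms drop and
\[
\int \sin^2\varphi \, u_n^2\, dx \;=\; \int_{a_n}^{a_n e^L} F(x) \bigl[\sin^2(\pi y/L) + (\pi/L) \sin(2\pi y/L)\bigr]\, dx .
\]
The non-negative $\sin^2$ piece, estimated via the lower bound on $F$, contributes at least $(B - \epsilon) L/2$; the oscillatory $\sin(2\pi y/L)$ piece, estimated via the upper bound on $F$, contributes $O(1)$. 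Combining,
\[
\langle u_n, L(t) u_n \rangle \;\le\; (L/2)\bigl(1/4 - t^2(B - \epsilon)\bigr) + O(1),
\]
which is strictly negative once $L$ is fixed large enough. Since the $u_n$ have pairwise disjoint supports, any linear combination has quadratic form equal to the sum of the individual ones, so the min-max principle delivers infinitely many negative eigenvalues of $L(t)$.

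The main technical point I foresee is handling the oscillatory integral $\int F(x) \sin(2\pi y/L)\, dx$: the lower bound on $F$ alone conveys no information about this quantity, and one genuinely needs an upper bound on $F$ of matching order $1/x$ to kill it. Dispatching $A = \infty$ via Theorem \ref{T1.3} is the cleanest way to secure such a bound; a more clever choice of test function might sidestep this case split, but it would not obviously shorten the core computation.
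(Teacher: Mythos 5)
Your proof is correct, but it takes a genuinely different route from the paper. The paper also reduces to Theorem \ref{T1.2}, but then argues on the non-oscillation side: starting from $t<S$ it takes an eventually positive solution $u$ of \eqref{se}, passes to the Riccati variable $\alpha=u'/u-t^2W$, shows $\alpha\ge 0$, and compares with $\alpha_1'=-(\alpha_1+t^2C/x)^2$ for $C<B$; unwinding this produces a zero-free solution of the Euler equation $-y''-t^2Cx^{-2}y=0$, which forces $t^2C\le 1/4$. You instead argue on the oscillation side with a direct variational construction: for $t^2B>1/4$ you build infinitely many disjointly supported trial functions $\sqrt{x}\sin(\pi\log(x/a_n)/L)$ modeled on the Hardy extremizer and show the form of $L(t)$ is negative on each, so the negative spectrum is infinite and $t\ge S$. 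The trade-off is visible exactly where you flagged it: the Riccati comparison needs only the one-sided bound $W(x)\ge C/x$, whereas your integration by parts leaves an oscillatory term that requires a matching upper bound $W(x)\le K/x$, hence your case split on $A=\infty$ via Theorem \ref{T1.3} (which is legitimate, as that theorem is proved independently and earlier). What your argument buys is uniformity of style with the paper's own proof of the upper bound in Theorem \ref{T1.3} (test functions and quadratic forms throughout, no ODE comparison machinery) and a transparent explanation of why the sharp Hardy constant $1/4$ appears; the paper's version is shorter and avoids the auxiliary hypothesis on $A$. All the computations you sketch ($\int(u_n')^2=L/8+\pi^2/(2L)$, the vanishing boundary terms, the $O(1)$ bound $2K$ on the oscillatory piece) check out.
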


We organize this paper as follows: Section 2 gives a quick review of the key oscillation theoretic
tools that we will need here. We then prove our comparison results, Theorems \ref{T1.1} and \ref{T1.2},
in Section 3 and the bounds on $M(H_d)$ (Theorems \ref{T1.3}, \ref{T1.4}) in Section 4. The final section
will present an example that confirms that the first inequality from Theorem \ref{T1.1} is sharp.
\section{Oscillation theory}
Let us quickly review how $M(H)$ may be found from the oscillatory (or non-oscillatory) behavior of
the solutions to \eqref{can}. This material was developed in detail in \cite{RS1}.

Given a non-trivial solution $u$ of \eqref{can} for $z=t\in\R$, introduce $R(x)>0$, $\theta(x)$ by writing
$u=Re_{\theta}$, with $\theta(x)$ continuous and here $e_{\theta}=(\cos\theta,\sin\theta)^t$.
Then the \textit{Pr{\"u}fer angle }$\theta(x)$ solves
\begin{equation}
\label{ot}
\theta'(x) = te^*_{\theta(x)}H(x)e_{\theta(x)} .
\end{equation}
We see that $\theta$ is monotone in both $t$ and $x$; in particular, $\lim_{x\to\infty} \theta(x)$ always
exists in the generalized sense. We call the equation \eqref{ot}, for a given $t\in\R$,
\textit{oscillatory }if $\lim_{x\to\infty} \theta(x)=\pm\infty$.
Whether or not this holds will not depend on which solution of \eqref{ot} is considered.

If $0\notin\sigma_{ess}$, then we can characterize
\[
M_+(H) := \min \sigma_{ess}(H)\cap (0,\infty)
\]
oscillation theoretically as follows:
\begin{equation}
\label{M+}
M_+(H) = \inf \{ t>0: \eqref{ot} \textrm{ is oscillatory} \}
\end{equation}
Please see \cite[Sections 2, 4]{RS1} for a detailed discussion.

Of course, a similar statement holds for the maximum $M_-(H)$ of the negative part of the essential spectrum
if $0\notin\sigma_{ess}$. Finally, $0\in\sigma_{ess}$ if and only if \eqref{ot} is oscillatory for all $t>0$ or
for all $t<0$ (or both).

The set from the right-hand side of \eqref{M+} always is of the form $(T,\infty)$ or $[T,\infty)$,
for a unique $T\in [0,\infty]$. This is an immediate consequence of the comparison principle for first
order ODEs \cite[Section III.4]{Hart}.
\section{Comparison results}
\begin{proof}[Proof of Theorem \ref{T1.1}]
We start with the first inequality. Of course, there is nothing to prove if
$M(H_d)=0$. Otherwise, let $t\in (0, M(H_d)/2)$. We will then show that
\eqref{ot} is non-oscillatory at this $t$. Since an analogous statement can be
established for $t\in (-M(H_d)/2, 0)$, the desired inequality will then follow from \eqref{M+}.

To do this, we simply write out the right-hand side of \eqref{ot}, which we will denote
by $tf(H)$. So
\begin{equation}
\label{3.1}
tf(H)= t\left( \sin^2\varphi \cos^2\theta +\cos^2\varphi\sin^2\theta +2g\sin\varphi\cos\varphi\sin\theta\cos\theta\right) .
\end{equation}
Obviously, $f(H)\le 2f(H_d)$, and here $tf(H_d)$ similarly denotes the right-hand side of \eqref{ot}
for the diagonal system,
which we can also obtain by simply setting $g=0$ in \eqref{3.1}.

Since the equation for the diagonal system at $2t$ is non-oscillatory, by our choice of $t$,
it now follows from the comparison principle for first order ODEs \cite[Section III.4]{Hart}
that the equation for the full system, at $t$,
has the same property.

The second part of the theorem is proved using similar ideas, but we now need to consider positive and
negative $t$ simultaneously as $M(H_d)$ is, in general, not controlled by just one of $M_{\pm}(H)$.
Let $t\in(0,M(H))$. Then \eqref{ot} is non-oscillatory
at both $t$ and $-t$, and thus the corresponding solutions $\theta (x)$ approach finite limits
when $x\to\infty$. We assumed
that $\sin\varphi\in L^2$, so these limits must be $\equiv 0\bmod \pi$. We may thus fix solutions $\theta_{\pm}$,
corresponding to $t$ and $-t$, respectively, such that $\theta_+(x)\le 0$ and $\theta_-(x)\ge 0$ for all $x\ge a$,
and $\lim_{x\to\infty} \theta_{\pm}(x)=0$. In fact, we will assume that these inequalities are strict. The other case,
when $\theta_{\sigma}(x_0)=0$ at some $x_0\ge a$ for $\sigma=+$ or $\sigma=-$,
is possible only if $\sin\varphi(x)\equiv 0$ for $x\ge x_0$,
and it is easy to show directly that $M(H)=M(H_d)=\infty$ then.

Let's look at $(\theta_+-\theta_-)'$; our plan is to relate this derivative to the right-hand side
of \eqref{ot} for the diagonal system.
We compute
\begin{align*}
\left(\theta_+-\theta_-\right)' & = t \left[ \sin^2\varphi(\cos^2\theta_+ +\cos^2\theta_- )
+\cos^2\varphi (\sin^2\theta_++\sin^2\theta_-) \right. \\
& \quad\quad\left. + g\sin\varphi\cos\varphi (\sin 2\theta_+ +\sin 2\theta_-) \right] \\
& \equiv tF(\theta_+,\theta_-,\varphi) .
\end{align*}
We now claim for any given $\delta>0$, we can estimate
\begin{equation}
\label{3.2}
F \ge (c-\delta) \left( \sin^2\varphi\cos^2(\theta_+-\theta_-) + \cos^2\varphi\sin^2(\theta_+-\theta_-) \right) ,
\end{equation}
provided that $\theta_{\pm}$ are small enough. Here, $c=(3-\sqrt{5})/2$ is the constant from Theorem \ref{T1.1}.
This will imply the desired inequality, by arguing as follows: First of all, the restriction that
$\theta_{\pm}$ are small will not affect the argument because this will always hold at all large $x$,
and of course the equation being oscillatory (or not) is an asymptotic property. Next, observe that indeed
as a function of $\theta\equiv \theta_+-\theta_-$,
the right-hand side of \eqref{3.2} is the right-hand side of \eqref{ot} for the diagonal system,
multiplied by $(c-\delta)/t$.
By our choice of $t$ and the comparison principle, the equation for $H_d$ is thus non-oscillatory
at $(c-\delta)t$. Since $\delta>0$ can be made arbitrarily small here, this shows that $ct\le M(H_d)$, as desired.

So it only remains to establish \eqref{3.2}, and this we do by a series of calculations.
Clearly, the right-hand side of \eqref{3.2}, without the factor $c-\delta$, is bounded from above by
\[
\sin^2\varphi + (\theta_+-\theta_-)^2\cos^2\varphi .
\]
On the other hand,
\[
F \ge (1-\eta) \left[ 2\sin^2\varphi + (\theta_+^2+\theta_-^2)\cos^2\varphi
-2\left| g(\theta_++\theta_-)\sin\varphi\cos\varphi\right| \right] ,
\]
and here $\eta>0$ can be kept arbitrarily small if we make sure that $\theta_{\pm}$ are small enough.
Moreover, we can then absorb $\eta$ by $\delta$ at the end, so it now suffices to show that
\begin{multline*}
2\sin^2\varphi + (\theta_+^2+\theta_-^2)\cos^2\varphi  - 2\left| \theta_++\theta_-\right|\,
\left| \sin\varphi\cos\varphi\right|\ge\\
c \left( \sin^2\varphi +(\theta_+-\theta_-)^2\cos^2\varphi \right) .
\end{multline*}
If we introduce $T=\tan\varphi$ and rearrange slightly, then this becomes
\[
(2-c)T^2 - 2|T|\, \left| \theta_++\theta_-\right| + \theta_+^2+\theta_-^2 - c(\theta_+-\theta_-)^2 \ge 0 .
\]
The minimum of the left-hand side occurs at $|T|=|\theta_++\theta_-|/(2-c)$.
So it is now sufficient to establish that
\[
-\frac{(\theta_++\theta_-)^2}{2-c} + \theta_+^2+\theta_-^2 - c(\theta_+-\theta_-)^2 \ge 0 .
\]
We rewrite this one more time by introducing $q=-\theta_+/\theta_-$. Then what we want to show becomes
\[
-\frac{(-q+1)^2}{2-c} + q^2 + 1 - c(q+1)^2 \ge 0
\]
or, equivalently,
\[
(c^2-3c+1)q^2 + 2(c-1)^2 q + c^2-3c+1 \ge 0 .
\]
Since $c^2-3c+1=0$ for our choice of $c=(3-\sqrt{5})/2$ and $q>0$, this last inequality is obvious.
\end{proof}
\begin{proof}[Proof of Theorem \ref{T1.2}]
To relate our (diagonal) canonical systems to the Schr{\"o}dinger operator $L(t)$, we exploit the well known
fact that Schr{\"o}dinger equations may be rewritten as Riccati equations, and this will get us to \eqref{ot}.
More precisely, \eqref{ot} is not itself a Riccati equation, but will become one after a suitable Taylor expansion
of the right-hand side.

In the first part of the argument, we show that $M(H_d)\le S$.
Let $t\in (0,M(H_d))$. So \eqref{ot}, for the diagonal system, has a solution $\theta$ with $\theta(x)\le 0$
for all $x\ge a$. By taking $a>0$ large enough here, we may also assume that $\theta(x)$ is small.
We then estimate the right-hand side of \eqref{ot} as follows:
\begin{align*}
\cos^2\varphi\sin^2\theta + \sin^2\varphi\cos^2\theta & = \sin^2\theta + \cos 2\theta \sin^2\varphi\\
& \ge (1-\eta)(\theta^2 + \sin^2\varphi)
\end{align*}
Here, $\eta>0$ can be made arbitrarily small by making sure that $\theta(x)$ is small. In particular,
we can demand that also $t<(1-\eta)M(H_d)$, and then the comparison principle shows that the solution $\theta_1$ of
\[
\theta'_1 = t(\theta_1^2 +\sin^2\varphi), \quad\quad \theta_1(a)=\theta(a)
\]
will also satisfy $\theta_1(x)\le 0$, $x\ge a$. Now, as announced, we rewrite this Riccati equation as a
Schr{\"o}dinger equation, by using the transformation $\theta_1=-u'/(tu)$. More precisely, we define
\[
u(x) = \exp \left( -t\int_a^x \theta_1(s)\, ds \right) .
\]
A quick calculation then shows that this $u$ solves $-u''-t^2(\sin^2\varphi) u=0$, and obviously $u(x)>0$
for $x\ge a$. Thus classical oscillation theory for this Schr{\"o}dinger equation (see, for example,
\cite{WMLN}) shows that its negative spectrum is finite. It follows that $t\le S$, so $M(H_d)\le S$, as claimed.

Conversely, let now $t\in (0,S)$. Classical oscillation theory then implies that the solutions $u$ of
\begin{equation}
\label{se}
-u''(x)-t^2(\sin^2\varphi(x))u(x)=0
\end{equation}
have only finitely many zeros on $x>0$. This is not exactly what we need here. Rather,
we would like to have a solution whose \textit{derivative }is zero free eventually. In general,
that is not an equivalent condition, but here we can establish it anyway, thanks to the negativity
of the potential $-t^2\sin^2\varphi$.

We proceed as follows:
First of all, fix an $a>0$
such that there is a solution $u(x)$ of \eqref{se}
that is zero free on $[a,\infty)$. Then, again by classical oscillation theory,
an arbitrary solution has at most one zero on this interval. Consider now specifically the solution $u$ with
$u(a+1)=u'(a+1)=1$. If we had $u>0$ throughout the interval $[a,a+1]$, then it would follow that
$u''=-t^2(\sin^2\varphi)u\le 0$ there, and this concavity together with the initial conditions at $a+1$ would
give $u$ a zero on $[a,a+1]$ after all. So we have to admit that there is such a zero,
and hence $u>0$ on $[a+1,\infty)$.
Again, this shows us that $u''\le 0$ there. So if we had $u'(x_0)=0$ at some $x_0>a+1$, then, by the concavity of $u$,
a second zero can only be avoided if $\sin^2\varphi\equiv 0$ on $[x_0,\infty)$. This is a trivial scenario since
then clearly $u=x$ is a solution of the desired type (in fact, it can again easily be shown directly that
$M(H_d)=S=\infty$ in this case). So, to summarize, we have found a solution $u$ with $u(x), u'(x)>0$
for $x\ge a+1$.

We now estimate the right-hand side of \eqref{ot} from above by $t(\theta^2+\sin^2\varphi)$.
Then we again
consider the corresponding comparison equation
\begin{equation}
\label{3.3}
\theta'_2 = t(\theta_2^2 +\sin^2\varphi) .
\end{equation}
The same transformation as above shows us that this is solved, on $[a+1,\infty)$, by $\theta_2=-u'/(tu)$,
and here we of course use the solution $u$ that was constructed above. It follows that $\theta_2(x)<0$ on $x\ge a+1$.
The comparison principle thus implies that the original equation \eqref{ot}, for $H_d$, is non-oscillatory.
Hence $t\le M(H_d)$, as required; recall here again that the spectrum of a \textit{diagonal }canonical system
is symmetric about zero \cite[Theorem 6.13]{Rembook}, so there is no need to consider
negative values of $t$ separately.
\end{proof}
\section{Bounds on $M(H_d)$}
We present proofs of Theorems \ref{T1.3}, \ref{T1.4} here. These results are also closely related
to the bounds for canonical systems with non-negative spectrum that
we established in \cite{RS1}; see Theorems 4.1, 4.2 there. To make this connection explicit,
one can use the transformations between diagonal and semibounded canonical systems that were discussed
in detail in \cite[Section 5]{RS1}. Somewhat surprisingly perhaps, the bounds one obtains in this way
are similar but not identical to the ones we prove here.
\begin{proof}[Proof of Theorem \ref{T1.3}]
To establish the first inequality, we again work with \eqref{3.3} as our comparison equation.
Let $t>0$. Introduce
\[
W(x) = \int_x^{\infty} \sin^2\varphi(s)\, ds
\]
and then $\alpha =t\theta_2+t^2W$. A quick calculation shows that this solves
\begin{equation}
\label{4.6}
\alpha'=(\alpha-t^2W)^2 .
\end{equation}
Moreover, for any $B>A$ and all sufficiently large $x$, we may estimate the right-hand side of this equation
from above by $(\alpha(x)-t^2B/x)^2$, at least as long as $\alpha(x)\le 0$.

So let us now look at the corresponding comparison equation
\begin{equation}
\label{4.1}
\alpha'_1 = \left( \alpha_1 - \frac{t^2B}{x} \right)^2 .
\end{equation}
We established in \cite{RS1} that if $t^2B<1/4$, then \eqref{4.1} will have a global solution
$\alpha_1(x)\le 0$ on a suitable half line $x\ge a$.
Please see eqn.\ (4.3) of \cite{RS1} and the discussion that follows for the details.

So we can now retrace our steps and apply the comparison principle repeatedly. Note that along the way,
we will confirm that $\alpha(x)\le\alpha_1(x)\le 0$, and this justifies the step from \eqref{4.6}
to \eqref{4.1}. We conclude that \eqref{ot}
is non-oscillatory. The upshot of all this is the statement that if $t^2B<1/4$,
then $t\le M(H_d)$. Since $B>A$ can be arbitrarily close
to $A$ here, this implies that $M(H_d)\ge 1/(2\sqrt{A})$, as asserted.

The second inequality can be proved in similar style, by discussing \eqref{ot} and suitable comparison equations
directly. An alternative, very transparent argument can be given with the help of Theorem \ref{T1.2},
and this we will now present.

Let $t\in (0,S)$. Then for all large $a>0$,
the Schr{\"o}dinger operator $L=-d^2/dx^2-t^2\sin^2\varphi(x)$ has \textit{empty }negative
spectrum when considered on $L^2(a,\infty)$ with Dirichlet boundary conditions $u(a)=0$ at $x=a$.
For example, this follows from classical oscillation theory.

Thus the quadratic form
\[
Q(u) = \int_a^{\infty} \left( u'(x)^2 - t^2(\sin^2\varphi(x)) u^2(x) \right)\, dx
\]
will satisfy $Q(u)\ge 0$ for all test functions $u\in H^1$, $u(a)=0$. Consider now a tent shaped test function
\[
u(x) = \begin{cases} \frac{x-a}{b-a} & x<b \\ 1 & b<x<c \\ \frac{d-x}{d-c} & c<x<d \\ 0 & x>d \end{cases} ;
\]
the first three intervals will eventually be taken very large.

By the definition of $A$, for any $B<A$, we can find arbitrarily large $b>a$ such that
$\int_b^{\infty}\sin^2\varphi\, dx \ge B/b$. Fix such a $b$ for the moment. Then
\[
Q(u) \le \frac{1}{b-a} - \frac{t^2B}{b} +o(1) ,
\]
where the error term denotes a contribution that can be made arbitrarily small by sending $c,d-c\to\infty$.
Since, as remarked, $Q(u)\ge 0$ and $b$ can be arbitrarily large, it follows that $t^2B\le 1$.
Since $t$ and $B$ can be arbitrarily close to $S$ and $A$, respectively, this says that $S\le 1/\sqrt{A}$, as claimed.
\end{proof}
\begin{proof}[Proof of Theorem \ref{T1.4}]
We again use Theorem \ref{T1.2}. Let $t\in (0,S)$. As in the proof of Theorem \ref{T1.2},
we again work with a solution $u$ of \eqref{se} with initial values $u(a)=u'(a)=1$. We saw above
that then $u(x)\not=0$ for $x\ge a$ if we take $a>0$ large enough here.
So we can now introduce $\alpha=u'/u-t^2W$, with
\[
W(x)= \int_x^{\infty}\sin^2\varphi(s)\, ds ,
\]
again as above. Then $\alpha$ solves
\begin{equation}
\label{4.4}
\alpha' = - \left( \alpha+t^2W \right)^2 .
\end{equation}
For any $C<B$ and for all sufficiently large $x$, we then have the upper bound $-(\alpha+t^2C/x)^2$
on the right-hand side, provided that $\alpha(x)\ge 0$. This will hold at least initially, for $x\ge a$
close to $a$, if we took $a$ large enough so that $W(x)$ is already small there. However, then we
observe that in fact $\alpha(x)\ge 0$ for all $x\ge a$: if we had $\alpha(b)=-\alpha_0<0$, then obviously
also $\alpha(x)\le -\alpha_0$ for all $x\ge b$. Moreover, $W(x)\to 0$, so comparison
of \eqref{4.4} with (say)
\[
\alpha'=-\frac{1}{2}\,\alpha^2, \quad\quad \alpha(c)=-\alpha_0
\]
for a suitable (large) $c\ge b$
would show that $\alpha(x)\to-\infty$ in finite time, when we sent $x\to d$ for some $d>0$. This blatant contradiction
to our earlier definition of $\alpha$ as a globally defined function shows that $\alpha(x)\ge 0$ for all
$x\ge a$. In particular, our upper bound on the right-hand side of \eqref{4.4} is valid for all these $x$.

So consider now the corresponding comparison initial value problem
\begin{equation}
\label{4.11}
\alpha'_1 = - \left( \alpha_1 + \frac{t^2C}{x} \right)^2, \quad\quad \alpha_1(b)=\alpha(b) .
\end{equation}
We know that this has a global solution $\alpha_1(x)\ge 0$, $x\ge b$. Moreover, \eqref{4.11}
is essentially the same equation that we already encountered above, as \eqref{4.1}.
We can solve explicitly by defining $\beta=\alpha_1+t^2C/x$. Then
\begin{equation}
\label{4.3}
\beta' = -\beta^2 - \frac{t^2C}{x^2} ,
\end{equation}
and this Riccati equation we can rewrite as a Schr{\"o}dinger equation
\begin{equation}
\label{4.2}
-y'' - \frac{t^2C}{x^2}\, y = 0
\end{equation}
by writing $\beta=y'/y$, or, more precisely, by defining
\[
y(x)=\exp\left( \int_b^x\beta(s)\, ds\right) .
\]
It is well known (and easy to show, by solving it explicitly as an Euler equation)
that \eqref{4.2} will be oscillatory if $t^2C>1/4$, so all solutions $y$ have infinitely many zeros in this case.
We already have a solution $y$ that is obviously zero free, so we have to admit that $t^2C\le 1/4$.
It follows that $S\le 1/(2\sqrt{B})$, as claimed.
\end{proof}
\section{An example}
Let's start out with a Schr{\"o}dinger equation
\begin{equation}
\label{se1}
-y'' + \frac{1}{4}\, y = zy
\end{equation}
with constant potential $V(x)\equiv 1/4$. (Any non-zero constant value would work here, but this choice
will slightly simplify the calculation by saving us one additional step that would otherwise be necessary.)
As is well known \cite[Section 1.3]{Rembook}, we can rewrite \eqref{se1} as an equivalent canonical system.
To do this, we can introduce $Y=(y',y)^t$, so that then $y$ solves \eqref{se1} if and only if $Y$ solves
\begin{equation}
\label{5.1}
Y' = \begin{pmatrix} 0 & 1/4-z \\ 1 & 0 \end{pmatrix} Y .
\end{equation}
Let
\[
T_0 = \begin{pmatrix} p' & q' \\ p & q
\end{pmatrix}
\]
be the matrix solution of \eqref{5.1} for $z=0$ that is built from the solutions $p=e^{x/2}$, $q=e^{-x/2}$ of \eqref{se1}.
Notice that these are chosen such that $\det T_0(x)=1$.

Now introduce $u(x)$ by writing $Y=T_0u$. Then a straightforward calculation shows that $Y$ solves \eqref{5.1}
if and only if $u$ solves the canonical system \eqref{can} with coefficient function
\[
H(x) = \begin{pmatrix}
p^2 & pq \\ pq & q^2 \end{pmatrix} = \begin{pmatrix}
e^x & 1 \\ 1 & e^{-x}
\end{pmatrix}
\]
(which is not trace normed, but this won't matter here).
The corresponding diagonal system
\[
H_d(x) = \begin{pmatrix}
e^x & 0 \\ 0 & e^{-x}
\end{pmatrix}
\]
satisfies $\det H_d(x)=1$, and such diagonal systems can be rewritten as (special) Dirac equations
\[
Jv' = \begin{pmatrix} 0 & W(x) \\ W(x) & 0 \end{pmatrix}v - zv ,
\]
by using a transformation quite similar to the one we just employed. This is discussed in more detail
in \cite[Section 6.4]{Rembook}, and it is also shown there that if $H_d=\textrm{diag}\: (a,a^{-1})$,
then $W=a'/(2a)$. So $W\equiv 1/2$ here.

Of course, it is a trivial matter to find the bottom of the essential spectrum for these Schr{\"o}dinger
and Dirac operators with constant potentials. In this way, we see that $M(H)=1/4$, $M(H_d)=1/2$. (Alternatively,
$M(H_d)$ can also be found quite conveniently with the help of Theorems \ref{T1.3}, \ref{T1.4}, without
going through the Dirac system.) This shows
that the first inequality of Theorem \ref{T1.1} is sharp.

\end{document}